\theoremstyle{plain}
\newtheorem{theorem}{Theorem}[section]
\newtheorem{proposition}[theorem]{Proposition}
\newtheorem{remark}[theorem]{Remark}
\newtheorem{question}[theorem]{Question}
\title{\bf On the union stabilization of \\two Heegaard splittings}
\author{Jung Hoon Lee}
\address{\small School of Mathematics, KIAS\\
207-43, Cheongnyangni 2-dong, Dongdaemun-gu\\
Seoul, Korea\\} \email{jhlee@kias.re.kr}
\date{}
\begin{document}

\subjclass{Primary 57N10, 57M50}

\keywords{Heegaard splitting, stabilization, union stabilization,
spine of handlebody}

\begin{abstract}
Let two Heegaard splittings $V_1\cup W_1$ and $V_2\cup W_2$ of a
$3$-manifold $M$ be given. We consider the {\it union
stabilization} $M=V\cup W$ which is a common stabilization of
$V_1\cup W_1$ and $V_2\cup W_2$ having the property that
$V=V_1\cup V_2$. We show that any two Heegaard splittings of a
$3$-manifold have a union stabilization. We also give some
examples with numerical bounds on the minimal genus of union
stabilization. On the other hand, we give an example of a
candidate for which the minimal genus of union stabilization is
strictly larger than the usual stable genus --- the minimal genus
of common stabilization.
\end{abstract}

\maketitle

\section{Introduction}

 A {\it Heegaard splitting} $M=H_1\cup_S H_2$ of a $3$-manifold $M$
 is a decomposition of $M$ into two handlebodies $H_1$ and $H_2$
 and it is well known that every compact $3$-manifold admits
 Heegaard splittings.

 A {\it stabilization} of a Heegaard
 splitting $H_1\cup_S H_2$ is an operation that results in
 a new Heegaard splitting $H'_1\cup_{S'} H_2'$, with the genus
 increased by one. Here $H'_1$ is obtained by adding trivial $1$-handle
 to $H_1$ whose core is $\partial$-parallel in $H_2$ and $H'_2$ is
 obtained by removing the $1$-handle from $H_2$.

 When two Heegaard splittings $V_1\cup W_1$ and $V_2\cup W_2$ of a
 $3$-manifold are given, they become isotopic after a sequence of
 stabilizations \cite{Singer}. The minimal genus among the common
 stabilizations is called the {\it stable genus}. Now we introduce
 a new concept slightly stronger than the common stabilization.

 A Heegaard splitting $V\cup W$ is a {\it union stabilization}
 of $V_1\cup W_1$ and $V_2\cup W_2$ if
 \begin{itemize}
\item{$V\cup W$ is a common stabilization
 of $V_1\cup W_1$ and $V_2\cup W_2$.}
 \item{$V=V_1\cup V_2$}
 \end{itemize}

 In section $2$, we show that any two Heegaard splittings have a union
 stabilization, similar to the results of \cite{Singer}. The
 minimal genus among the union stabilizations is called the {\it union
 genus}.

 \begin{remark}
 $(1)$ From the definition of the union stabilization, it follows
 that $W=W_1\cap W_2$.\\
 $(2)$ The union stabilization and the union genus can be defined
similarly for Heegaard splittings of $3$-manifolds with boundary.
\end{remark}

 When two Heegaard splittings of a $3$-manifold are given, it has
 been conjectured that they become isotopic after a single
 stabilization of the larger genus one \cite{Kirby}. However,
 recently there are results that this is not true and there are
 examples that require as many stabilizations as the genus of the
 Heegaard splittings \cite{HTT}, \cite{Johnson1}, \cite{Johnson2},
 \cite{Bachman}. This implies that the union genus also can
 be large enough. In section $3$, we give some examples with
 bounds on the union genus.

 In section $4$, we give an example of a candidate for which the
 union genus is strictly larger than the stable genus implying
 that a common stabilization of two Heegaard splittings cannot be
 obtained by the union of both handlebodies.

\section{Existence of the union stabilization}

We begin with a simple case where two Heegaard splittings are
isotopic.

\begin{proposition}Suppose $V_1\cup W_1$ and $V_2\cup W_2$ are
isotopic Heegaard splittings of a $3$-manifold. Then there is a
union stabilization with genus increased by one.
\end{proposition}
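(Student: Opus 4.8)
The plan is to build the second handlebody from the first by \emph{re‑routing a single handle} across the Heegaard surface, so that the union of the two handlebodies is visibly one stabilization of $V_1\cup W_1$.

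Write $g$ for the common genus and assume first that $g\geq 1$. Fix a representative $V_1\cup W_1$ of the splitting and choose a handle decomposition $V_1=V_0\cup h_1$, where $V_0$ is a genus $g-1$ handlebody and $h_1$ a $1$-handle attached to $V_0$ along two disks; let $e$ be the core arc of $h_1$, so $\partial e\subset\partial V_0$. Since $e$ is a core of $h_1$, it is isotopic rel $\partial e$, through $\overline{V_1\setminus V_0}$, to an arc that agrees with $e$ near its endpoints and otherwise lies on $\Sigma=\partial V_1$; pushing the part on $\Sigma$ slightly into $W_1$ produces an embedded arc $e^{*}$ with $\partial e^{*}=\partial e$ such that (i) $e^{*}\cap W_1$ is a single subarc, $\partial$‑parallel in $W_1$, and (ii) $e^{*}$ is isotopic to $e$ rel $\partial e$ by an ambient isotopy of $M$ supported off $V_0$.

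Now put $h_2=N(e^{*})$, a thin regular neighborhood of $e^{*}$ regarded as a $1$‑handle attached to $V_0$ along $\partial e$, and set $V_2=V_0\cup h_2$, $W_2=M\setminus\operatorname{int}V_2$. By (ii) the handle $h_2$ is isotopic to $h_1$ rel $V_0$, so $V_2\cup W_2$ is isotopic to $V_1\cup W_1$ and hence may be taken as the second given splitting. On the other hand, because $V_0\subset V_1$, the part of $V_2$ lying outside $V_1$ is exactly the part $h'$ of $h_2$ lying in $W_1$, which by (i) is a trivial $1$‑handle of $W_1$. Therefore $V:=V_1\cup V_2=V_1\cup h'$ and $W:=M\setminus\operatorname{int}V=W_1\setminus\operatorname{int}h'$ are precisely the handlebodies obtained by stabilizing $V_1\cup W_1$ along $h'$, so $V\cup W$ is a Heegaard splitting of genus $g+1$, and one checks directly that $W_1\cap W_2=W_1\setminus\operatorname{int}(V_2\cap W_1)=W_1\setminus\operatorname{int}h'=W$, as the definition requires. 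Since $V\cup W$ is a stabilization of $V_1\cup W_1$ and $V_1\cup W_1$ is isotopic to $V_2\cup W_2$, it is a common stabilization of the two splittings, and it satisfies $V=V_1\cup V_2$; thus it is a union stabilization with genus increased by one. In the remaining case $g=0$ we have $M=S^{3}$ with $V_1,V_2$ balls, and we simply take $V_2$ to be a ball meeting $V_1$ in two disjoint sub‑balls so that $V_1\cup V_2$ is an unknotted solid torus; this union together with its solid‑torus complement is the genus‑one splitting of $S^{3}$, i.e.\ the stabilization of the genus‑zero one.

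The construction is short; the one point that needs care is the choice of $e^{*}$. It must be routed through $W_1$ as a $\partial$‑parallel arc, so that the extra piece of $V_2$ is a \emph{trivial} handle and $V_1\cup V_2$ is an honest single stabilization rather than merely an isotopic copy of $V_1$; yet it must remain isotopic rel endpoints to the core of $h_1$, so that $V_2\cup W_2$ does not leave the isotopy class of the second splitting. Reconciling these two requirements is exactly where the hypothesis that $V_1\cup W_1$ and $V_2\cup W_2$ are isotopic is used: the isotopy is what lets the two handlebodies share the common ``core'' $V_0$ relative to which one handle is re‑routed.
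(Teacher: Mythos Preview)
Your argument is correct, and the underlying geometric idea---position $V_2$ so that it shares almost all of $V_1$ and differs only by one re-routed handle passing trivially through $W_1$---is exactly what the paper's picture proof (Fig.~1) is depicting. The paper simply draws the two handlebodies overlapping (the ``green'' region) and points to a disk $D$ that is a stabilizing disk \emph{for both} splittings simultaneously; you instead make the construction explicit via the decomposition $V_1=V_0\cup h_1$ and the re-routed core $e^{*}$.

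The one genuine difference in presentation is how the ``common stabilization'' condition is verified on the $V_2$ side. The paper exhibits the meridian disk $D$ of the extra handle and observes that it destabilizes $V\cup W$ back to \emph{each} of $V_i\cup W_i$ directly. You instead check destabilization only to $V_1\cup W_1$ and then invoke the hypothesis that the two splittings are isotopic (together with uniqueness of stabilization up to isotopy) to conclude that $V\cup W$ is also a stabilization of $V_2\cup W_2$. Both are valid; your route is slightly less symmetric but avoids having to argue that the core of $h_1\cap W_2$ is $\partial$-parallel in $W_2$, which in your setup would follow by the evident symmetry between $e$ and $e^{*}$ but is not something you wrote down.
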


\begin{proof}
The two handlebodies of Heegaard splittings are yellow and blue
handlebodies respectively (Fig. 1). Let some parts of each
handlebodies coincide and it is colored green. We can see that the
disk $D$ serves as a stabilizing disk for both.
\end{proof}

\begin{figure}[h]
    \centerline{\includegraphics[width=7cm]{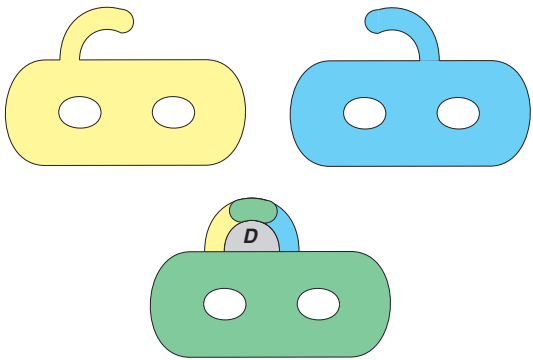}}
    \caption{Yellow $+$ Blue $=$ Green}
\end{figure}

A {\it spine} $\Sigma_H$ of a handlebody $H$ is a finite graph in
$H$ where $H$ defomation retracts to $\Sigma_H$.

 The following is a result similar to \cite{Singer}.

\begin{theorem}
Let $V_1\cup_{S_1} W_1$ and $V_2\cup_{S_2} W_2$ be two Heegaard
splittings of a $3$-manifold $M$. Then there exists a union
stabilization of $V_1\cup_{S_1} W_1$ and $V_2\cup_{S_2} W_2$.
\end{theorem}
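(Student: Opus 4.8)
The plan is to construct the union stabilization from the spines of the two Heegaard handlebodies, in the spirit of the spine‑theoretic proof of the Reidemeister--Singer theorem. Fix a spine $\Sigma_1$ of $V_1$ and a spine $\Sigma_2$ of $V_2$, so that each $V_i$ is a regular neighborhood of $\Sigma_i$. Since $\Sigma_1$ and $\Sigma_2$ are $1$-complexes in the $3$-manifold $M$, a generic perturbation makes them disjoint; joining them by an embedded arc meeting $\Sigma_1\cup\Sigma_2$ only in its endpoints and then sliding $\Sigma_2$ along this arc until one of its vertices coincides with a vertex of $\Sigma_1$, I may assume $\Sigma_1\cap\Sigma_2$ is a single point. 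Put $\Sigma=\Sigma_1\cup\Sigma_2$ and $V=N(\Sigma)$, a handlebody. Taking the regular neighborhood small enough that it restricts to $V_1$ on $\Sigma_1$ and to $V_2$ on $\Sigma_2$, we get $V=V_1\cup V_2$ with $V_1\cap V_2$ a $3$-ball, which is exactly the property required of a union stabilization; the complementary candidate $W=M\setminus\mathrm{int}\,V$ then automatically satisfies $W=W_1\cap W_2$.

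What remains is to choose the isotopy classes of $\Sigma_1$ and $\Sigma_2$ so that $W$ is genuinely a handlebody and $V\cup W$ is a common stabilization of $V_1\cup W_1$ and $V_2\cup W_2$. I would check the stabilization property by assembling $\Sigma$ from $\Sigma_1$ one edge at a time: order the edges of $\Sigma_2$ so that the edges of a spanning tree come first (attaching an edge with a free endpoint changes neither the handlebody nor its complement up to isotopy), and then each remaining ``cycle'' edge, provided it is $\partial$-parallel in the complement of the current handlebody, contributes exactly one stabilization. Granting that $\Sigma_2$ can be isotoped so that each such edge is $\partial$-parallel in the relevant complementary handlebody, it follows that $V\cup W$ is obtained from $V_1\cup W_1$ by as many stabilizations as the genus of $V_2$; in particular $W$ is a handlebody. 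Running the same argument with the roles of $\Sigma_1$ and $\Sigma_2$ interchanged shows $V\cup W$ is also a stabilization of $V_2\cup W_2$, which would finish the proof.

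The hard part is precisely this $\partial$-parallelism, together with the fact that it must be arranged for both splittings at once: after fixing $\Sigma_1$ the edges of $\Sigma_2$ are to be made $\partial$-parallel in complements of $W_1$, but any perturbation of $\Sigma_1$ that makes its edges $\partial$-parallel with respect to $S_2$ moves $S_1$. To decouple this I would first invoke the Reidemeister--Singer theorem to replace $V_1\cup W_1$ and $V_2\cup W_2$ by isotopic stabilizations of themselves, so that Proposition 2.1 becomes applicable, and then arrange the Reidemeister--Singer stabilizations so that every stabilizing handle added to $V_1$ lies inside $V_2$ and every one added to $V_2$ lies inside $V_1$; such handles can be absorbed into the opposite handlebody, which forces the resulting common stabilization to coincide with $V_1\cup V_2$. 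Establishing that the stabilizing arcs can be chosen in this position --- for instance by an innermost‑arc argument applied to the intersection of a complete meridian system of $W_1$ with $V_2$, or via a Rubinstein--Scharlemann sweepout --- is where the real content lies, and I expect it to be the main technical obstacle.
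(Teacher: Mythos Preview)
Your outline correctly identifies the target --- arrange $V=V_1\cup V_2$ with $V_1\cap V_2$ a collection of balls and $W=\mathrm{cl}(M\setminus V)$ a handlebody --- but it stops exactly where the content is. You never give a mechanism forcing $W$ to be a handlebody; the phrase ``granting that $\Sigma_2$ can be isotoped so that each such edge is $\partial$-parallel'' is the whole problem, and your fallback via Reidemeister--Singer is circular: you would need to show that the RS stabilizing arcs can be pushed so that the resulting common stabilization equals $V_1\cup V_2$, which is precisely the statement at issue. The ``innermost-arc'' and ``sweepout'' suggestions are not developed into an argument.

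The paper closes this gap with two ideas you are missing. First, it works entirely inside the handlebody $W_1$, writes $W_1\cong(\text{punctured disk})\times I$, and views the spine $\Sigma_{V_2}\subset W_1$ through its planar diagram; adding a tunnel to $V_2$ at each crossing (and then an arc from each tunnel out to $\partial W_1$) is an explicit, finite procedure after which the exterior of the augmented $V_2$ in $W_1$ is visibly a handlebody. These added tunnels and arcs are then reinterpreted as an isotopy of $V_2$ dragging small pieces onto $V_1$, so that $V_1\cap V_2$ is a finite collection of $3$-balls and $V=V_1\cup V_2$ has handlebody complement $W$. Second --- and this replaces your delicate edge-by-edge $\partial$-parallelism entirely --- once $V\cup W$ is a genuine Heegaard splitting, the uniqueness of Heegaard splittings of a handlebody \cite{ST} applied to the induced splittings $W_1=\mathrm{cl}(V_2\setminus V_1)\cup W$ and $W_2=\mathrm{cl}(V_1\setminus V_2)\cup W$ shows immediately and symmetrically that $V\cup W$ is a stabilization of both $V_1\cup W_1$ and $V_2\cup W_2$. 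No simultaneous positioning of $\Sigma_1$ and $\Sigma_2$ is required.
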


\begin{proof}
Take $V_1$ and $V_2$ as a very thin neighborhood of $\Sigma_{V_1}$
and $\Sigma_{V_2}$ respectively. So we may assume that $V_1\cap
V_2=\emptyset$. Hence $V_2$ is in $W_1$. Furthermore, we assume
that $V_2$ is an embedded graph in $W_1$. Note that genus $g$
handlebody is homeomorphic to $g$\,-punctured disk $\times$ $I$.
We consider the projection of $V_2$ to the $g$\,-punctured disk
and the diagram of $V_2$ with over and under information.

If we add sufficiently many tunnels to $V_2$ (at least, as many
tunnels as the number of crossings of the diagram), the exterior
in $W_1$ would be a handlebody. Connect each tunnel to $\partial
W_1$ with an arc so that the exterior of the union of $V_2$ and
tunnels and arcs is still a handlebody. However, adding tunnels
and arcs is equivalent to the isotopy of some parts of $V_2$ along
the tunnels and arcs and let the parts coincide with some parts of
$V_1$. Fig. 2 shows the isotopy. Then we can see that $(V_1\cup
V_2)\cup cl((V_1\cup V_2)^c)$ is a union stabilization since
$V_1\cap V_2$ is a collection of $3$-balls and by the uniqueness
of Heegaard splittings of a handlebody \cite{ST}.
\end{proof}

\begin{figure}[h]
    \centerline{\includegraphics[width=8cm]{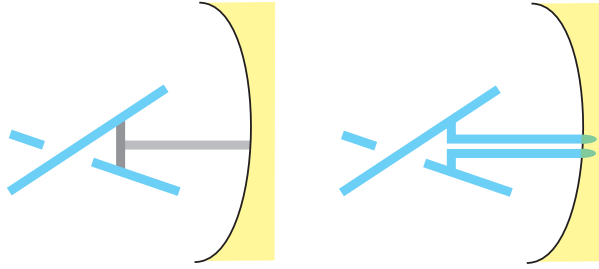}}
    \caption{Isotopy of $V_2$ and $V_1\cap V_2$ $=$ green $3$-balls}
\end{figure}

\section{Examples}
In this section we show some examples with numerical bounds on the
union genus. First example is that the spine of the handlebody of
one Heegaard splitting is on the other Heegaard surface.
 For a Heegaard splitting
$V\cup_S W$, let $g(S)$ denote the genus of $S$.

\begin{proposition}
Let $V_1\cup_{S_1} W_1$ and $V_2\cup_{S_2} W_2$ be two Heegaard
splittings of a $3$-manifold $M$ such that $\Sigma_{V_2}$ is in
$S_1$. Then the union genus of $V_1\cup_{S_1} W_1$ and
$V_2\cup_{S_2} W_2$ is less than or equal to $g(S_1)+g(S_2)$.
\end{proposition}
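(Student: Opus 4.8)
The plan is to exhibit a union stabilization of genus $g(S_1)+g(S_2)$ by enlarging $V_1$ along a slightly displaced copy of the spine $\Sigma_{V_2}$. The role of the hypothesis $\Sigma_{V_2}\subset S_1$ is that, once $\Sigma_{V_2}$ is pushed just off $S_1$ into $W_1$, it becomes $\partial$-parallel there; this makes the resulting splitting a stabilization of $V_1\cup W_1$ and, unlike in the proof of Theorem~2.2, lets the complementary handlebody be obtained without adjoining any tunnels, so the genus stays at $g(S_1)+g(S_2)$.

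First I would isotope $\Sigma_{V_2}$ off $S_1$ into a collar of $\partial W_1=S_1$ in $W_1$, and adjoin a short arc running back through $S_1$ to a point in the interior of $V_1$; write $\Sigma'$ for the resulting graph and set $V_2:=N(\Sigma')$. Since $\Sigma'$ is isotopic in $M$ to $\Sigma_{V_2}$, $N(\Sigma')$ is isotopic to the original $V_2$, so this merely isotopes the splitting $V_2\cup W_2$. After this, $V_1\cap V_2$ is a single $3$-ball, $V_2\setminus V_1$ lies in the collar of $\partial W_1$, and---choosing $\Sigma_{V_2}$ to be a wedge of $g(S_2)$ loops---the part $\Sigma'\cap W_1$ is a $\partial$-parallel graph in $W_1$ that meets $\partial W_1$ in a disk.

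Now set $V:=V_1\cup V_2$ and $W:=cl(M\setminus V)$. Then $V=V_1\natural V_2$ is a genus $g(S_1)+g(S_2)$ handlebody, and $W=cl(W_1\setminus V_2)$ is the handlebody $W_1$ with a regular neighbourhood of the $\partial$-parallel graph $\Sigma'\cap W_1$ removed; performed one loop of the spine at a time this is precisely the ``remove a trivial $1$-handle'' move of Section~1, so $W$ is a genus $g(S_1)+g(S_2)$ handlebody and $V\cup W$ is a Heegaard splitting with $V=V_1\cup V_2$. Reading this dually, $V\cup W$ is obtained from $V_1\cup W_1$ by attaching to $V_1$, one at a time, the $g(S_2)$ trivial $1$-handles $N(\Sigma'\cap W_1)$, whose cores lie in the collar and so are $\partial$-parallel in $W_1$; hence $V\cup W$ is a $g(S_2)$-fold stabilization of $V_1\cup W_1$. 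Finally, since $V_1\cap V_2$ is a single $3$-ball, $V$ is $V_2$ with a handlebody (isomorphic to $V_1$) attached along a disk of $S_2$, and its complement $W$ inside $W_2=cl(M\setminus V_2)$ is a handlebody; exactly as in the proof of Theorem~2.2, this configuration together with the uniqueness of Heegaard splittings of a handlebody \cite{ST} shows that $V\cup W$ is also a stabilization of $V_2\cup W_2$. Therefore $V\cup W$ is a union stabilization of genus $g(S_1)+g(S_2)$, so the union genus is at most $g(S_1)+g(S_2)$.

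The real obstacle is the last step, showing that the passage from $V_2\cup W_2$ to $V\cup W$ is a genuine stabilization: the naive hope that a spine of the attached copy of $V_1$ is $\partial$-parallel in $W_2$ fails, so one must instead run the argument of Theorem~2.2---using that the intersection is a single $3$-ball and that $W$ is already a handlebody to recover $V_2\cup W_2$ from $V\cup W$ by $g(S_1)$ destabilizations via \cite{ST}. Keeping the intersection a single $3$-ball, rather than several, is also precisely what prevents the genus from rising above $g(S_1)+g(S_2)$.
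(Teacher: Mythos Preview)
Your proposal is correct and follows essentially the same approach as the paper: push $\Sigma_{V_2}$ off $S_1$ into a collar in $W_1$, connect to $V_1$ by a single arc so that $V_1\cap V_2$ is one $3$-ball, read off directly that $V\cup W$ is a $g(S_2)$-fold stabilization of $V_1\cup W_1$, and then invoke \cite{ST} (via the argument of Theorem~2.2) to conclude it is also a $g(S_1)$-fold stabilization of $V_2\cup W_2$. Your write-up is in fact more detailed than the paper's, which simply notes the ``annulus of parallelism'' and the essential connecting arc without spelling out the one-loop-at-a-time destabilization or the role of the single $3$-ball intersection.
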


\begin{proof}
Take $V_2$ as s a very thin neighborhood of $\Sigma_{V_2}$. Push
$V_2$ slightly into $W_1$ so that there is an annulus of
parallelism between $\partial V_2$ and $\partial V_1$. Connect
$V_1$ and $V_2$ by an essential arc of the annulus. Then the
result is a $g(S_2)$-times stabilization of $V_1\cup_{S_1} W_1$.
Connecting $V_1$ and $V_2$ by an arc is equivalent to making some
parts of $V_1$ and $V_2$ coincide as in the proof of Theorem 2.2.
Then since $V_1\cap V_2$ is a $3$-ball and by the uniqueness of
Heegaard splittings of a handlebody \cite{ST}, it is also a
$g(S_1)$-times stabilization of $V_2\cup_{S_2} W_2$. Hence the
union genus of $V_1\cup_{S_1} W_1$ and $V_2\cup_{S_2} W_2$ is less
than or equal to $g(S_1)+g(S_2)$.
\end{proof}

The next example is the $(g,n)$ presentation of a knot $K$ in
$S^3$. Let $V_1\cup V_2$ be a decomposition of $S^3$ into two
standardly embedded genus $g$ handlebodies $V_1$ and $V_2$.
Suppose that $V_i\cap K$ ($i=1,2$) are trivial $n$-string tangles.
Then we call $(V_1\cap K, V_2\cap K)$ the genus $g$, $n$-bridge
presentation of $K$, or $(g,n)$ presentation for short. Clearly
such a knot $K$ has tunnel number less than or equal to $g+n-1$ as
Fig.3 illustrates.

\begin{figure}[h]
    \centerline{\includegraphics[width=5cm]{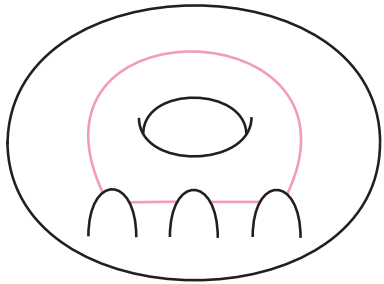}}
    \caption{$(g,n)$ presentation of a knot with a tunnel system ($g=1$, $n=3$)}
\end{figure}

\begin{proposition}
Suppose a knot $K$ with a $(g,n)$ presentation has tunnel number
$g+n-1$. Let $\mathcal{T}_i$ be a tunnel system in $V_i$ as in
Fig. 3 $(i=1,2)$. Then the two Heegaard splittings induced by
$\mathcal{T}_1$ and $\mathcal{T}_2$ has union genus less than or
equal to $2g+2n-1$.
\end{proposition}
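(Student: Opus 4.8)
The plan is to follow the proofs of Theorem 2.2 and Proposition 3.1, taking the compression-body sides of the two Heegaard splittings of the knot exterior as the two ``$V_i$'' of the union-stabilization framework.

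First I would fix notation. Isotope $\mathcal{T}_1$ into $\mathrm{int}\,V_1$ and $\mathcal{T}_2$ into $\mathrm{int}\,V_2$, keeping the endpoints on $K$; since $V_1\cap V_2$ is the genus $g$ Heegaard surface of $S^{3}$, this makes $\mathcal{T}_1\cap\mathcal{T}_2=\emptyset$. Put $E(K)=S^{3}\setminus\mathrm{int}\,N(K)$. The Heegaard splitting induced by $\mathcal{T}_i$ is $E(K)=C_i\cup_{\Sigma_i}B_i$ with $C_i=N(\partial E(K))\cup N(\mathcal{T}_i)$ a genus $g+n$ compression body and $B_i$ a genus $g+n$ handlebody. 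I would prove that $V:=C_1\cup C_2$ and $W:=B_1\cap B_2=cl(V^c)$ form a union stabilization of $C_1\cup B_1$ and $C_2\cup B_2$. Because $\mathcal{T}_1\cap\mathcal{T}_2=\emptyset$, $V=N(\partial E(K))\cup N(\mathcal{T}_1\cup\mathcal{T}_2)$ is a compression body obtained from $N(\partial E(K))\cong T^{2}\times I$ by attaching $2(g+n-1)$ one-handles, so $g(\partial_{+}V)=1+2(g+n-1)=2g+2n-1$; this yields the stated bound once $V\cup W$ is known to be a union stabilization.

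To see that $V\cup W$ is a common stabilization it is enough, by symmetry, to treat $C_1\cup B_1$. Since the $(g,n)$-presentation and the tunnels of Fig. 3 are standard, $N((K\cap V_1)\cup\mathcal{T}_1)$ may be isotoped onto $V_1$, so $A_1:=N(K\cup\mathcal{T}_1)$ is isotopic to $V_1\cup N(K\cap V_2)$ and hence $B_1=S^{3}\setminus\mathrm{int}\,A_1=V_2\setminus\mathrm{int}\,N(K\cap V_2)$. Under this identification $\mathcal{T}_2$ is a properly embedded system of $g+n-1$ arcs in $B_1$, and $B_1\setminus N(\mathcal{T}_2)=V_2\setminus N\bigl((K\cap V_2)\cup\mathcal{T}_2\bigr)$ is a collar of $\partial V_2$ punctured along the $2n$ points $K\cap\partial V_2$, hence a handlebody of genus $2g+2n-1$. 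Thus $B_1=\bigl(N(\partial B_1)\cup N(\mathcal{T}_2)\bigr)\cup\bigl(B_1\setminus N(\mathcal{T}_2)\bigr)$ is a Heegaard splitting of the handlebody $B_1$ of genus $2g+2n-1$; by the uniqueness of Heegaard splittings of a handlebody \cite{ST} it is the stabilized trivial one, and as its genus exceeds the trivial genus $g+n$ by exactly $g+n-1$, adjoining $N(\mathcal{T}_2)$ to $C_1$ amounts to $g+n-1$ elementary stabilizations of $C_1\cup B_1$. Therefore $V\cup W$ is a stabilization of $C_1\cup B_1$, and symmetrically of $C_2\cup B_2$; since $V=C_1\cup C_2$ it is a union stabilization of genus $2g+2n-1$, so the union genus is at most $2g+2n-1$.

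The step I expect to be the main obstacle is this last identification: recognising the second tunnel system as a boundary-parallel, unlinked arc system inside the handlebody side of the first splitting, so that its union with $C_1$ is a genuine stabilization and not merely a Heegaard splitting of larger genus. That is precisely where the standardness of the $(g,n)$-presentation, the explicit tunnels of Fig. 3, and the uniqueness theorem \cite{ST} are needed, exactly as in Theorem 2.2 and Proposition 3.1; one should also take a little care to push the endpoints of $\mathcal{T}_2$, which lie on $K$, onto the relevant Heegaard surface so that $N(\mathcal{T}_2)$ is attached along disks.
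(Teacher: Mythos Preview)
Your argument is correct and follows essentially the same route as the paper: both proofs rest on the observation that the complement in $V_i$ of the trivial $n$-tangle together with $\mathcal{T}_i$ is a ($2n$-punctured genus $g$ surface)$\times I$, hence a genus $2g+2n-1$ handlebody. The paper uses this directly, cutting the exterior $W$ along the original $(g,n)$-surface into two such pieces $X_1,X_2$ whose union is visibly a handlebody; you instead isotope $B_1$ onto $V_2\setminus N(K\cap V_2)$ and make the same computation inside $B_1$, then invoke \cite{ST} to certify that attaching $\mathcal{T}_2$ to $C_1$ is a genuine stabilization---a step the paper leaves implicit under ``it suffices to show\ldots''. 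So your extra care with \cite{ST} fills in exactly the point the paper takes for granted, but the underlying geometry is the same.
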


\begin{proof}
It suffices to show that the exterior in $S^3$ of the union of $K$
and the tunnels in $\mathcal{T}_1$ and $\mathcal{T}_2$ is a genus
$2g+2n-1$ handlebody. The exterior $X_1\subset V_1$ of the union
of the trivial $n$-string tangle and $\mathcal{T}_1$ is a genus
$2g+2n-1$ handlebody. The exterior $X_2\subset V_2$ of the union
of the trivial $n$-string tangle and $\mathcal{T}_2$ is
homeomorphic to a ($2n$-punctured genus $g$ surface) $\times$ $I$.
Since $X_1\cap X_2$ is a $2n$-punctured genus $g$ surface, the
total exterior $X_1\cup X_2$ is homeomorphic to a genus $2g+2n-1$
handlebody.
\end{proof}

\section{Towards an example that union genus $>$ stable genus}

In this section, we construct an example of a candidate for which
the union genus is strictly larger than the stable genus.

Consider a $2$-bridge knot and two unknotting tunnels as in the
Fig. 4, which are put on the bridge sphere \cite{Kobayashi}.

\begin{figure}[h]
    \centerline{\includegraphics[width=3.5cm]{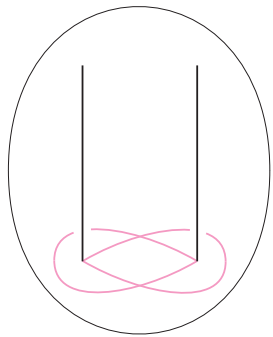}}
    \caption{Two unknotting tunnels of a $2$-bridge knot}
\end{figure}

The two unknotting tunnels intersect in two points. By some
perturbation, we can make them disjoint. Then there exist some
knot, for example a $2$-bridge knot $6_3$ in knot table, such that
the four cases of perturbations as in Fig. 5 do not give genus
three handlebodies. (For each case, we can check the fundamental
group of the exterior using the Wirtinger presentation.)

\begin{figure}[h]
    \centerline{\includegraphics[width=6cm]{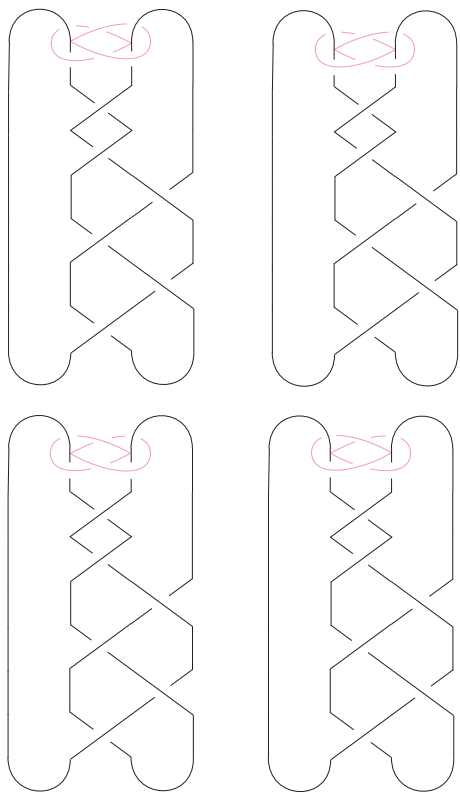}}
    \caption{These do not give genus three Heegaard splittings.}
\end{figure}

 We guess that the two Heegaard splittings of the
$2$-bridge knot $6_3$ induced by the unknotting tunnels in Fig. 4
and Fig. 5 have union genus four.

\begin{remark}
We can see that the stable genus of above example is three. The
common stabilization is isotopic to the stabilization of the genus
two Heegaard splitting induced by the tunnel connecting the two
strands of the knot (Fig. 6).
\end{remark}

\begin{figure}[h]
    \centerline{\includegraphics[width=7cm]{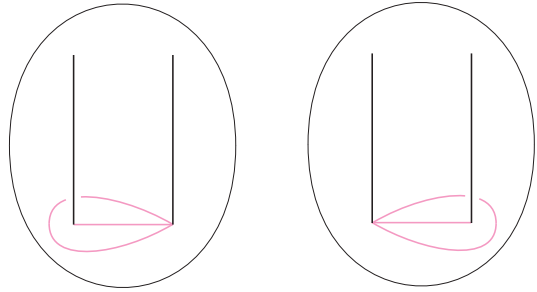}}
    \caption{The stable genus is three.}
\end{figure}

So finally we have the following question.

\begin{question}
There exists two Heegaard splittings of a $3$-manifold that the
union genus is strictly larger than the stable genus.
\end{question}

\end{document}